\newcommand{\bC}{\mathbb{C}}
\newcommand{\bZ}{\mathbb{Z}}
\newcommand{\bR}{\mathbb{R}}
\newcommand{\bRP}{\mathbb{RP}^n}
\newcommand{\bCP}{\mathbb{CP}^n}
\newcommand{\tCP}{\mathbb{CP}^n\times\mathbb{CP}^n}
\newcommand{\bHP}{\mathbb{HP}^n}
\newcommand{\HPt}{\mathbb{HP}^2}
\newcommand{\OPt}{\mathbb{OP}^2}
\newcommand{\cO}{\mathcal{O}}
\newtheorem{thm}{Theorem}[section]
\newtheorem{prop}[thm]{Proposition}
\newtheorem{rem}[thm]{Remark}
\newtheorem{lem}[thm]{Lemma}
\newtheorem{defn-prop}[thm]{Definition-Proposition}
\title{A Note on Zoll Manifolds with Entire Grauert Tubes}
\author{Kyobeom Song}
\date{\today}
\begin{document}

\maketitle

\begin{abstract}
We compute several algebraic indices of the algebraization of the tangent bundle of a Zoll manifold that admits an entire Grauert tube.  As an application, we prove that any Zoll manifold of type~$\HPt$ with an entire Grauert tube is isometric to the canonical $\HPt$.
\end{abstract}

\section{Introduction}

A Riemannian manifold $(M,g)$ is called a \emph{Zoll manifold of period~$\ell$}, or simply a $C_{\ell}$-manifold, if it satisfies
\begin{enumerate}[label=(\roman*)]
  \item every geodesic $\gamma:\bR\to M$ suffices $\gamma(t+\ell)=\gamma(t)$, and
  \item $\gamma$ is injective on $(0,l)$.
\end{enumerate}
A quick example is the round sphere $S^{n}$. Additional examples arise as compact rank-one symmetric spaces (CROSS) $\bRP,~\bCP,~\bHP,~\text{and}~\OPt$, with their canonical metrics.

By the Bott–Samelson theorem, any Zoll manifold has the same cohomology ring as exactly one of these five CROSSes, which we call the \emph{type} of the manifold. However, this does not imply isometry: Darboux initiated explicit investigations of $C_l$-manifold in 1894, and in 1903, Zoll produced the first exotic Zoll metric on $S^{2}$, which made the name. For an extensive historical reference, we recommend \cite{B12}.

\smallskip
In this paper, we particularly study Zoll manifolds whose tangent bundle admits an \emph{entire Grauert tube}: the tangent bundle $TM$ carries a complex structure~$J$ such that the function $u(v)=\|v\|$ is a plurisubharmonic exhaustion solving the homogeneous complex Monge–Ampère equation
\[
\bigl(i\partial\bar\partial u\bigr)^{n}=0,
\]
i.e.\ $(TM,J,u)$ becomes a Stein manifold with center $M$.

\paragraph{Main Question.}
Are the CROSSes the unique Zoll manifolds admitting entire Grauert tubes?

This question is important in three distinct contexts:
\begin{enumerate}
  \item \textbf{Extension of the classical Zoll metric classification problem.}
    The most intriguing feature of a Zoll metric lies in its tangent bundle~$\mathrm{TM}$.  As is well known, the tangent bundle of a Riemannian manifold carries a natural symplectic structure given by the Poincaré $2$–form.  Moreover, the geodesic flow induced by a Zoll metric preserves this form, so that finding Zoll metrics is equivalent to determining which tangent bundles admit a symplectic manifold structure for which the geodesic flow generates an $S^1$–equivariant action.  In this setting, the main question asks whether strengthening the symplectic condition to require a Kähler structure excludes any exotic examples beyond the CROSS.
  
  \item \textbf{Strengthening classification results for Stein manifolds by their center.}
    In \cite{PW91}, Patrizio and Wong proved that among Stein manifolds, those whose center is a CROSS are unique up to biholomorphism for each CROSS.  This stands as one of the cornerstone classification results for Stein manifolds.  The main question is its natural generalization: if one replaces the requirement “center is a CROSS” by “center is a Zoll,” does the same uniqueness still hold?
  
\item \textbf{Exploring examples of good complexifications.}
Following Totaro \cite{T03}, a \emph{good complexification} of a closed smooth manifold \(M\) is a smooth affine algebraic variety \(U\) over \(\mathbb{R}\) such that
\[
    U(\mathbb{R})\;\cong\; M 
    \quad\text{(diffeomorphism)}
    \qquad\text{and}\qquad
    U(\mathbb{R})\hookrightarrow U(\mathbb{C})
    \;\text{ is a homotopy equivalence.}
\]

A long‐standing conjecture asserts that a closed real manifold is nonnegatively curved \emph{iff} it admits a good complexification.  Manifolds with good complexifications are already known to satisfy strong topological restrictions: their Euler characteristic is non-negative (Kulkarni \cite{K78}) and, more strongly, all odd Betti numbers vanish (Totaro \cite{T03}).  If the conjecture were true, such conditions would immediately settle classical problems concerning the cohomology of nonnegatively curved manifolds, e.g., the Hopf conjecture.

At present, every \emph{known} nonnegatively curved manifold does admit a good complexification, yet producing new examples of good complexification remains notably difficult.  In this context, the Main Problem acquires its significance: Burns and Leung \cite{BL18} observed that entire Grauert tubes of Zoll manifolds are themselves a good complexification.  Thus, should the Main Problem turn out to have a negative answer, it would be an exotic new example of a manifold with a good complexification.

\end{enumerate}

To date, the Main Question is known to hold for type \(S^n\) and \(\mathbb{RP}^n\) \cite{BL18}, and \(\mathbb{CP}^n\) \cite{LS25}, but remains completely open for \(\mathbb{HP}^n\) and \(\mathbb{OP}^2\).  In fact, the difficulty in these two cases is expected to be substantially greater for the following reasons. The framework that has succeeded in the known cases can be summarized in three steps:
\begin{enumerate}
  \item Use the Zoll structure on \(M\) to algebraize the tangent bundle \(\mathrm{TM}\) into a projective manifold \(X\).
  \item Analyze the indices of \(X\) to show that \(X\) is Fano, and then apply Fano characterization theorems to conclude that \(X\) is biholomorphic to the corresponding model case.
  \item Exploit the well‐known geometry of the model case \(X\cong\overline{Q}^n\) or \(\tCP\) to prove that the totally real submanifold \(M\subset X\) is isometric to the model.
\end{enumerate}

For the first step, the remarkable work of Burns and Leung \cite{BL18} applies to Zoll manifolds of any type. Nevertheless, the second step—determining the biholomorphic type of the
projective algebraization \(X\) of \(TM\)—faces serious obstacles when the Zoll
manifold is of type \(\bHP\) or \(\OPt\).
According to Patrizio–Wong~\cite{PW91}, the algebraizations arising in each canonical model are summarized in Table~\ref{tab:PW-algebraizations}.

\begin{table}[h]
\centering
\renewcommand{\arraystretch}{1.15}
\caption{Algebraizations of \(TM\) for the five CROSS types}
\label{tab:PW-algebraizations}
\begin{tabular}{c|ccccc}
\hline
\textbf{Canonical \(M\)}
        & \(S^{n}\) & \(\mathbb{RP}^{n}\) & \(\mathbb{CP}^{n}\) & \(\mathbb{HP}^{n}\) & \(\mathbb{OP}^{2}\) \\ \hline
Algebraization of \(TM\)
        & \(\overline{Q}^{\,n}\)
        & \(\mathbb{CP}^{n}\)
        & \(\mathbb{CP}^{n}\times\mathbb{CP}^{n}\)
        & \(\mathrm{Gr}(2,2n+2)\)
        & \(E_{6}/P_{1}\) \\
\hline
\end{tabular}
\end{table}

In the first three cases (\(S^{n}\), \(\mathbb{RP}^{n}\), and \(\mathbb{CP}^{n}\))
the algebraizations either have high index or Picard number \(2\); hence powerful characterization theorems are available and play a decisive role in resolving the corresponding instances of the Main Problem.
By contrast, for $Gr(2,2n+2)$ and $E_6/P_1$, virtually no general
characterization theorems exist, and the few available results rely on strong
hypotheses concerning the VMRT or the existence of dominating curve families.
Even in the third step, the geometry of $Gr(2,2n+2)$ and $E_6/P_1$ is more difficult than the previous three cases, making these two types considerably harder than the others.

In this paper, as a preliminary study toward resolving the Main Question for the still‐open types \(\mathbb{HP}^n\) and \(\mathbb{OP}^2\), we first record several observations on the various algebraic indices that the projective algebraization \(X\) must satisfy.  Using these observations, we then settle the Main Question in the case of \(\mathbb{HP}^2\):

\begin{thm}
Let \((M,g)\) be a Zoll manifold of type \(\mathbb{HP}^2\) admitting an entire Grauert tube.  Then \((M,g)\) is, up to a constant rescaling, isometric to the quaternionic projective plane \(\mathbb{HP}^2\) equipped with its canonical metric.
\end{thm}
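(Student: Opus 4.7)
The plan is to execute the three-step framework outlined in the introduction, specialized to the $\HPt$ case. First, the Burns--Leung construction \cite{BL18} yields a smooth projective algebraization $X$ of $TM$ containing $M$ as its totally real center. Since $(M,g)$ has the cohomology ring of $\HPt$, the underlying smooth $8$-fold $X$ is topologically pinned down, and the algebraic-index computations developed earlier in this paper should imply that $X$ is Fano with Picard number $1$ and Fano index $6$, i.e.\ a Mukai manifold of coindex $3$, sharing these invariants with $\Gtx$.

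The decisive second step is to promote this numerical data to a biholomorphism $X\cong\Gtx$. I would invoke Mukai's classification of Fano $n$-folds with Picard number $1$ and coindex $3$, and use the finer invariants produced by the index computations (the anticanonical degree together with low-degree Chern-class intersection numbers) to single out $\Gtx$ among the candidate Mukai $8$-folds. A backup is Mok's VMRT characterization of irreducible Hermitian symmetric spaces, applied to the family of rational curves swept out by the complexifications of the closed geodesics of $M$; the numerical data should guarantee that this family has the correct degree and VMRT type to match $\Gtx$. Once $X\cong\Gtx$ biholomorphically, the final step is direct: Patrizio--Wong's rigidity theorem \cite{PW91} ensures that the Monge--Amp\`ere exhaustion on $\Gtx$ with compact totally real center is essentially unique, so the center $M$ is congruent to the canonical $\HPt$ under the automorphism group, whence $(M,g)$ is isometric to the canonical $\HPt$ up to the constant rescaling fixed by the Zoll period $\ell$.

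The obvious hard part is the middle step. Unlike the $S^n$, $\bRP$, and $\bCP$ cases, where very high Fano index or Picard number $\geq 2$ makes powerful characterization theorems directly available, a Fano $8$-fold with Picard number $1$ and index $6$ falls into Mukai's list, which contains several \emph{a priori} competitors to $\Gtx$ (various linear sections of larger Grassmannians, spinor or symplectic Grassmannian models, and the like). The entire argument therefore hinges on squeezing enough information out of the index computations to eliminate these competitors. This is precisely the obstacle expected to become considerably more severe for $\bHP$ with $n\geq 3$ and for $\OPt$, and it explains why the Main Question remains open in those cases.
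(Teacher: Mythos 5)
Your Steps 1 and 2 track the paper closely: the paper also feeds the invariants $\rho(X)=1$, $\dim X=8$, $\operatorname{index}=6$ into Mukai's coindex-$3$ classification, and then eliminates the two competitors (the sextic double cover of $\mathbb{CP}^8$ and the quartic double cover of $Q^8$) using the cohomology groups computed in Proposition~\ref{prop:cohomology} (e.g.\ $H^4(X)\cong\bZ^2$, which the double covers do not have). One caution: the paper's degree formula (Theorem~\ref{thm:deg}) is proved under the hypothesis that $M$ is \emph{homeomorphic} to $\bHP$, not merely of that type, so "anticanonical degree'' is not freely available as a distinguishing invariant here; the cohomological argument is what actually closes this step.

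The genuine gap is your third step. Patrizio--Wong's theorem runs in the opposite direction from the way you use it: it says that a Monge--Amp\`ere model \emph{whose center is already known to be a CROSS} is biholomorphic to the standard model. It does \emph{not} assert that every plurisubharmonic exhaustion solving the homogeneous Monge--Amp\`ere equation on $\Gtx\setminus(\text{hyperplane section})$ with compact totally real center has center congruent to the canonical $\HPt$ --- that assertion is essentially the content of the Main Question in this case, so invoking it begs the question. Concretely, after obtaining $X\cong\Gtx$ you still must (a) normalize the divisor: show $i'(D)$ is a hyperplane section and use the rank stratification of skew $2$-forms on $\bC^6$ together with $\mathrm{PGL}(6)$-transitivity to move it to the standard rank-$6$ section $D_{\mathrm{std}}$; (b) normalize the involution: use Leung's classification of antiholomorphic involutions of $\Gtx$ (whose real forms are the real Grassmannian or $\HPt$) and the cohomology of $\mathrm{Fix}(N_{-1})=M$ to conjugate $N_{-1}$ to the quaternionic involution by an element of $\mathrm{PGSp}(6)$, which yields only a \emph{diffeomorphism} $M\cong\HPt$; and (c) upgrade to an isometry by proving the Zoll norm function $u$ equals the canonical norm $u_0$. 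The paper does (c) leaf by leaf: both functions have the asymptotics $-\log|z|+o(1)$ at the two punctures of each compactified leaf $L_\gamma\cong\mathbb{CP}^1$, $u$ is harmonic there while $u_0$ is subharmonic, so the bounded subharmonic difference $u_0-u$, which vanishes on $|z|=1$, is forced to vanish identically by Hadamard's three-circles theorem and the maximum principle. None of this analytic comparison is present in, or replaceable by, your citation of \cite{PW91}, so as written the proposal does not reach an isometry.
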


\textbf{Acknowledgments.} The author is deeply grateful to Professor Chi Li for many valuable suggestions and discussions, which greatly contributed to the completion of this paper. The author is partially supported by NSF (Grant No. DMS-2305296).

\section{Structure of the Tangent Bundle}

Let $(M,g)$ be a Zoll manifold with period $2\pi$ whose tangent bundle admits an \emph{entire Grauert tube} structure.  
In this section we describe several geometric structures on $TM$ and introduce its projective algebraization~$X$.

\subsection{The Riemann foliation}\label{subsec:foliation}

Fix a point $p\in M$ and a unit tangent vector $v\in T_pM$.  
Let $\gamma\colon\bR\to M$ be the geodesic with $\gamma(0)=p$ and $\gamma'(0)=v$.  
Define a map
\[
    \varphi_\gamma\colon\bC \;\longrightarrow\; TM,\qquad
    \varphi_\gamma(\sigma+i\tau)=\tau\,\gamma'(\sigma).
\]
If $\gamma$ and $\beta$ are geodesics having distinct images, then 
$\operatorname{im}\varphi_\gamma\cap\operatorname{im}\varphi_\beta=\varnothing$ inside $TM\setminus M$.  
Hence the collection $\{\operatorname{im}\varphi_\gamma\}_{\gamma}$ forms a foliation of $TM\setminus M$, called the \emph{Riemann foliation}. Under the Zoll condition of period $2\pi$, note that each leaf $\varphi$ can in fact be viewed as an immersion of the cylinder $\phi:\mathbb{C}/2\pi\mathbb{Z}\;\to\;TM$.

\subsection{(Entire) Grauert tubes}\label{subsec:Grauert}

For \(0<r\le\infty\) set
\[
   T^{r}M \;:=\;
   \bigl\{\,v\in TM \mid \|v\|<r\bigr\}.
\]
If there exists a complex structure \(J\) on \(T^{r}M\) such that every map
\(\varphi_\gamma:\mathbb{C}\to TM\) introduced earlier is holomorphic with respect to~\(J\) when $\varphi_\gamma(\sigma+i\tau)\in T^rM$,
then \(J\) is called the \emph{adapted complex structure} and
\(T^{r}M\) is called a \emph{Grauert tube}.
The case \(r=\infty\) is referred to as an \emph{entire Grauert tube}.

Let
\[
    u:TM\longrightarrow\mathbb{R}_{\ge0},\qquad u(v)=\|v\|,
\]
be the fiberwise norm function.
Lempert and Szőke \cite{LS91,S91} proved that \(T^{r}M\) is a Grauert tube
\emph{iff} the triple \(\bigl(T^{r}M,M,u^{2}\bigr)\) forms a
\emph{Monge–Ampère model}, that is:
\begin{enumerate}[label=\textbf{(\arabic*)}]
    \item \(T^{r}M\) is a Stein manifold with
          \(\dim_{\mathbb{C}}T^{r}M=\dim_{\mathbb{R}}M\) and center
          \(M=\{u=0\}\);
    \item \(u^{2}\) is a non-negative, strictly plurisubharmonic exhaustion of
          \(T^{r}M\);
    \item on \(T^{r}M\setminus M\) the function \(u\) satisfies the
          homogeneous complex Monge–Ampère equation
          \[
              \bigl(i\,\partial\bar\partial u\bigr)^{n}=0.
          \]
\end{enumerate}

On any Grauert tube, the rank condition
\(\operatorname{rank}\partial\bar\partial u = n-1\) gives rise to a
one-dimensional foliation on $T^rM\setminus M$, namely the \emph{Monge–Ampère foliation}. Lempert and Szőke showed that this foliation coincides with the Riemann foliation on the Grauert tube. For comprehensive accounts, see
\cite{BK77,GS91,LS91,S91}.

\begin{rem}[Harmonicity of the norm function]\label{rem:harmonic}
Although $u$ is not harmonic on all of $TM$, along each leaf we have
\[
    \bigl(u\circ\varphi_\gamma\bigr)(\sigma+i\tau)=|\tau|,
\]
which is harmonic on $\bigl(\bC/2\pi\bZ\bigr)\setminus\bigl(\bR/2\pi\bZ\bigr)$.
\end{rem}

\subsection{The space of oriented geodesics}\label{subsec:D}

Because $(M,g)$ is Zoll, each leaf $\operatorname{im}\varphi_\gamma$ is biholomorphic to the infinite cylinder.  
Adding the two limit points
\[
    \infty_\gamma := \lim_{\tau\to+\infty}\varphi_\gamma(\sigma+i\tau),\qquad
    0_\gamma      := \lim_{\tau\to-\infty}\varphi_\gamma(\sigma+i\tau),
\]
compactifies the leaf to a Riemann sphere.  
The point $\infty_\gamma$ corresponds to the oriented geodesic $\gamma$, while $0_\gamma$ corresponds to the reversed geodesic $-\gamma$.  
Define the \emph{space of oriented geodesics}
\[
    D \;=\;\{\infty_\gamma,\,0_\gamma \mid \gamma\text{ is an oriented geodesic of }(M,g)\}.
\]
Since $D$ is the free quotient of the unit tangent bundle $UM$ by the $S^1$–action generated by the geodesic flow,  
it inherits a smooth manifold structure.

\subsection{Projective algebraization $X$ of \texorpdfstring{$TM$}{TM}}\label{subsec:alg}

By compactifying each leaf with the two limit points introduced in~\S\ref{subsec:D}, we define  
\[
    L_\gamma \;:=\; \varphi_\gamma \;\sqcup\;\{\infty_\gamma,0_\gamma\},
    \qquad
    X \;:=\; TM \;\sqcup\; D .
\]
Throughout we view $X$ as the {\em projective algebraization} of~$TM$. The name "Projective algebraization" will be explained very soon.

\subsection{An antiholomorphic involution $N_{-1}$}

Define a map
\[
    N_{-1}\colon X\longrightarrow X,\qquad
    N_{-1}(v) \;=\; -v\;\;(v\in TM), 
    \quad
    N_{-1}(\infty_\gamma)=0_\gamma,
    \quad
    N_{-1}(0_\gamma)=\infty_\gamma.
\]
Restricted to each compactified leaf $L_\gamma\cong\mathbb{CP}^1$, the map $N_{-1}$ is the complex conjugation, hence an antiholomorphic involution.

\begin{rem}
The involution $N_{-1}$ preserves virtually all structures introduced so far:  
its fixed‑point set is the zero section $M\subset X$, and it stabilizes every leaf as well as the divisor~$D$. 
\end{rem}

\subsection{Parallel vector fields}\label{subsec:parallel}

Let $\gamma$ be a geodesic on $M$ and $\{\gamma_t\}$ a smooth one‑parameter family with
\(
    V:=\partial_t\gamma_t\!\mid_{t=0}
\)
a Jacobi field along~$\gamma$.  
Setting
\[
    \zeta \;:=\;
    \partial_t
    L_{\gamma_t}\!\mid_{t=0},
\]
we obtain a section $\zeta$ of $TL_\gamma$, called the {\em parallel vector field} induced by~$V$.  
If $V$ is parallel, then
\(
    V-iJV
\)
is holomorphic; conversely, one can reconstruct the complex structure~$J$ of the entire Grauert tube from such vector fields - for detailed references, we recommend \cite{LS91,S91}.\\
A non‑trivial parallel vector field never vanishes on $X\setminus M$, including $\infty_\gamma$ and $0_\gamma$. This will be crucial for estimating the normal bundle of each leaf $L_\gamma\cong\mathbb{CP}^1$.

\subsection{Results of Burns--Leung}

Let $(M,g)$ be a Zoll manifold of period~$\ell$ whose tangent bundle is an entire Grauert tube.  
By~\cite{BL18} and \cite{L14}, the structures above satisfy:

\begin{prop}[Burns--Leung]\label{prop:BL}
\begin{enumerate}[label=\textbf{(\roman*)}]
    \item The complex structure on $TM$ extends smoothly to~$X$, making $X$ a compact complex manifold with $D$ a codimension‑$1$ submanifold.
    \item A Kähler potential
          \[
              \rho
              \;=\;
              \log\!\bigl(1+\cosh(4\pi u/\ell)\bigr)
          \]
          defines a Kähler metric on $TM$ that extends to~$X$.
    \item The line bundle $\cO_X(D)$ is positive; hence $D$ is ample and $X$ is a projective manifold.
    \item The map $N_{-1}$ is an antiholomorphic involution of~$X$.
\end{enumerate}
\end{prop}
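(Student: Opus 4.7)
I would treat the four assertions in the order (i)/(iv), (ii), (iii): statement (i) provides the complex analytic structure on which (iv) is a leafwise formality, and both (ii) and (iii) rest on an explicit computation of $i\partial\bar\partial\rho$ near $D$ that only makes sense once (i) is in hand.

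For (i), the plan is to construct a holomorphic atlas near each point of $D$ by combining a leaf coordinate with coordinates on the space of oriented geodesics. Given a geodesic $\gamma_{0}$, a complex basis for the parallel Jacobi fields along $\gamma_{0}$ parameterizes nearby oriented geodesics $\gamma_{s}$ by $s\in\bC^{n-1}$, while $w=\exp(-2\pi(\sigma+i\tau)/\ell)$ provides a disc coordinate on the leaf sending $\infty_{\gamma_{0}}$ to $w=0$. The nonvanishing of the parallel vector fields of \S\ref{subsec:parallel} at $\infty_{\gamma_{0}}$ supplies a holomorphic frame of $TX$ in these coordinates, and one verifies holomorphicity of the transitions between charts based on different geodesics. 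The main technical difficulty here is ensuring that this gluing really produces a genuine smooth complex manifold structure across $D$ rather than only a formal one; this is precisely the content of the Lempert--Sz\H{o}ke theory, which I would cite as a black box. Granted (i), claim (iv) is immediate: on each leaf $N_{-1}(\tau\gamma'(\sigma))=-\tau\gamma'(\sigma)=\varphi_{\gamma}(\sigma-i\tau)$, so $N_{-1}$ is complex conjugation in the leaf coordinate, and antiholomorphicity extends globally by continuity and density of the leaves.

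For (ii), I would first use $1+\cosh x=2\cosh^{2}(x/2)$ to rewrite
\[
    \rho=\log 2+2\log\cosh(2\pi u/\ell),
\]
which depends only on $u^{2}$ and is therefore smooth on all of $TM$. Near $M$, Taylor expansion gives $\rho\sim\log 2+(\pi/\ell)^{2}u^{2}$, so strict plurisubharmonicity follows from that of $u^{2}$. On $TM\setminus M$, the chain rule yields $\partial\bar\partial\rho=\rho''(u)\,\partial u\wedge\bar\partial u+\rho'(u)\,\partial\bar\partial u$; since $\partial\bar\partial u$ has rank $n-1$ with kernel tangent to the Monge--Amp\`ere foliation and $\partial u$ does not vanish along that foliation (by Remark \ref{rem:harmonic}), the sum is positive definite. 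For the extension across $D$, substituting $2\pi u/\ell=-\log|w|$ in a leaf chart near $\infty_\gamma$ gives
\[
    \rho=-2\log|w|+2\log(1+|w|^{2})-\log 2,
\]
so $i\partial\bar\partial\rho$ extends across $D$ as a Fubini--Study--type form in the leaf direction plus transverse terms that extend smoothly by (i); positivity persists near $D$ by continuity.

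Statement (iii) then follows from (ii): the expansion $\rho=-2\log|w|+O(1)$ near $D$ is exactly the statement that $\rho$ is a K\"ahler potential with a logarithmic pole along $D$, placing the resulting K\"ahler class in a positive multiple of $c_{1}(\cO_{X}(D))$. Positivity of $i\partial\bar\partial\rho$ therefore exhibits $\cO_{X}(D)$ as a positive line bundle, and $D$ is ample by the Kodaira embedding theorem, making $X$ projective.
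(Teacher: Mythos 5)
The paper offers no proof of this proposition at all: it is quoted verbatim from Burns--Leung \cite{BL18} and Leung \cite{L14}, so your sketch is being measured against the cited literature rather than against an in-paper argument. Your overall architecture is the right one --- the adapted complex structure and its extension across $D$ via leaf compactification for (i), the asymptotics $\rho=-2\log|w|+O(1)$ tying the K\"ahler class to $c_{1}(\cO_{X}(D))$ for (ii)--(iii) --- and black-boxing the Lempert--Sz\H{o}ke extension theory for (i) is a reasonable division of labor. But two of your steps have genuine logical gaps, not just omitted details.

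First, your argument for (iv) does not work as stated. Knowing that $N_{-1}$ restricts to complex conjugation on each leaf of a one-dimensional foliation says nothing about its behavior in the $n-1$ transverse directions: on $\bC^{2}$ foliated by $\{w=\mathrm{const}\}$, the map $(z,w)\mapsto(\bar z,f(w))$ is ``conjugation on every leaf'' for an arbitrary $f$, and ``density of the leaves'' cannot repair this since each individual leaf is a proper one-dimensional submanifold. The standard fix is to invoke the \emph{uniqueness} of the adapted complex structure: $J':=-dN_{-1}\circ J\circ dN_{-1}$ is again a complex structure for which every $\varphi_{\gamma}$ is holomorphic (because $N_{-1}\circ\varphi_{\gamma}(z)=\varphi_{\gamma}(\bar z)$), hence $J'=J$. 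Second, in (ii)--(iii) the phrase ``positivity persists near $D$ by continuity'' is false in general: a limit of positive $(1,1)$-forms is only semipositive, and strict positivity of the extended form at points of $D$ --- particularly in directions tangent to $D$ --- is precisely one of the delicate points of \cite{BL18}. Relatedly, your claim that the transverse terms of $\rho+2\log|w|$ ``extend smoothly by (i)'' does not follow from (i) alone; (i) provides a complex structure on $X$ but no regularity statement for $\rho$ modulo its logarithmic singularity in directions transverse to the leaf, which requires the detailed asymptotic analysis of the adapted structure near infinity. There are also two harmless slips: the disc coordinate should be $w=\exp\bigl(2\pi i(\sigma+i\tau)/\ell\bigr)$ so that $|w|=e^{-2\pi\tau/\ell}\to0$ at $\infty_{\gamma}$ (your formula has $|w|=e^{-2\pi\sigma/\ell}$), and the Taylor expansion near $M$ is $\rho\sim\log 2+(2\pi/\ell)^{2}u^{2}$, not $(\pi/\ell)^{2}u^{2}$.
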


\section{Observations on \texorpdfstring{$X$}{X}}

Characterizing the variety $X$ constructed above forms the second step in the three-stage framework outlined in the Introduction for resolving the Main Problem, and is the program’s key component. For the two unsolved types, $\bHP$ and $\OPt$, we first record several algebraic properties of $X$.

\subsection{Cohomology of \texorpdfstring{$X$}{X}}

One structural feature of~\(X\) is that it admits a decomposition
\[
    X = TM \cup \bigl(X\!\setminus\! M\bigr),
\]
to which the Mayer–Vietoris sequence applies.  Topologically
\[
    TM \simeq M, \qquad
    X\!\setminus\! M \simeq D, \qquad
    TM \cap \bigl(X\!\setminus\! M\bigr) \simeq UM,
\]
where \(UM\) is the unit tangent bundle of~\(M\).

\medskip\noindent
\textbf{Type convention.}
We say that \(M\) is of \emph{type~\(\bHP\)} (respectively, \(\OPt\))
if it has the same cohomology ring as \(\bHP\) (respectively, \(\OPt\)), namely
\[
    H^*(\bHP)=\bZ[z]/\!\langle z^{\,n+1}\rangle,
    \qquad
    H^*(\OPt)=\bZ[z]/\!\langle z^{\,3}\rangle.
\]

\begin{prop}\label{prop:cohomology}
Let \((X,UM,D)=(X_{4,n}, UM_{4,n}, D_{4,n})\) when \(M\) is of type~\(\bHP\) (so \(a=4\)),  
and \((X,UM,D)=(X_{8,2}, UM_{8,2}, D_{8,2})\) when \(M\) is of type~\(\OPt\) (so \(a=8,\,n=2\)).  
Then:
\begin{enumerate}[label=\textbf{(\arabic*)}]
    \item For the unit tangent bundle $UM$
          \[
              H^{k}\!\bigl(UM_{a,n}\bigr)\cong
              \begin{cases}
                  \bZ, & a\mid k,\; 0\le k\le an-1,\\
                  \bZ, & a\mid(k+1),\; an+1\le k\le 2an-1,\\
                  \bZ_{\,n+1}, & k=an,\\
                  0, & \text{otherwise}.
              \end{cases}
          \]
    \item For the divisor \(D\)
          \[
              H^{k}\!\bigl(D_{a,n}\bigr)\cong
              \begin{cases}
                  \bZ^{\lfloor k/a\rfloor +1}, &
                      2\mid k,\; 0\le k\le an-2,\\[2pt]
                  \bZ^{\lfloor(2an-2-k)/a\rfloor +1}, &
                      2\mid k,\; an\le k\le 2an-2,\\[2pt]
                  0, & \text{otherwise}.
              \end{cases}
          \]
    \item For the projective algebraization \(X\)
          \[
              H^{k}\!\bigl(X_{a,n}\bigr)\cong
              \begin{cases}
                  \bZ^{\lfloor k/a\rfloor +1}, &
                      2\mid k,\; 0\le k\le an,\\[2pt]
                  \bZ^{\lfloor(2an-k)/a\rfloor +1}, &
                      2\mid k,\; an+2\le k\le 2an,\\[2pt]
                  0, & \text{otherwise}.
              \end{cases}
          \]
\end{enumerate}
In particular, for $k\in\bZ$,
\[
    H^{k}\!\bigl(X_{4,n}\bigr)\cong H^{k}\!\bigl(\mathrm{Gr}(2,2n+2)\bigr),
    \qquad
    H^{k}\!\bigl(X_{8,2}\bigr)\cong H^{k}\!\bigl(E_{6}/P_{1}\bigr).
\]
\end{prop}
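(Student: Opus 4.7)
The plan is to compute $H^*(UM)$, $H^*(D)$, $H^*(X)$ in that order, via a Gysin-type exact sequence at each stage, throughout using the standard presentation $H^*(M) = \bZ[z]/\langle z^{n+1}\rangle$ with $|z|=a$.

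For \textbf{(1)}, $UM \to M$ is the unit sphere bundle of $TM$ (fiber $S^{an-1}$), and its Gysin sequence reads
\[
    \cdots \to H^{k-an}(M) \xrightarrow{\cup e(TM)} H^k(M) \to H^k(UM) \to H^{k-an+1}(M) \xrightarrow{\cup e(TM)} H^{k+1}(M) \to \cdots.
\]
The Euler class $e(TM) \in H^{an}(M) = \bZ$ equals $\chi(M)\,z^n$, i.e.\ $(n+1)\,z^n$ for type $\bHP^n$ and $3\,z^2$ for type $\OPt$, so the only nonzero cup product with $e(TM)$ is multiplication by $n+1$ (resp.\ $3$) on $H^0(M) = \bZ$. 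Reading off kernels and cokernels yields the stated formula; the torsion $\bZ_{n+1}$ in degree $an$ arises precisely as the cokernel of this multiplication.

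For \textbf{(2)}, the geodesic flow endows $UM$ with a free $S^1$-action whose quotient is $D$, so $\pi:UM\to D$ is a principal $S^1$-bundle with Gysin sequence
\[
    \cdots \to H^{k-2}(D) \xrightarrow{\cup\eta} H^k(D) \to H^k(UM) \to H^{k-1}(D) \xrightarrow{\cup\eta} H^{k+1}(D) \to \cdots.
\]
The key geometric input is the identification $\eta = c_1(\cO_X(D))|_D \in H^2(D)$. To justify it, observe that near an ideal point $\infty_\gamma \in D$, the local coordinate $w^{-1} = e^{-(\tau - i\sigma)}$ on $L_\gamma$ (coming from $\varphi_\gamma$) is a defining function of $D$, and the phase $-\arg w = \sigma$ on the circle $\{|w|=1\}\subset L_\gamma$ is precisely the parameter of the geodesic $S^1$-action on the fiber of $UM \to D$ over $\infty_\gamma$. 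Thus $UM \to D$ coincides, as principal $S^1$-bundle, with the unit circle bundle of the normal line bundle $\nu_{D/X} = \cO_X(D)|_D$, and the two Euler classes agree. Since $\cO_X(D)$ is ample by Proposition~\ref{prop:BL}(iii), $\eta$ is ample on $D$, so Hard Lefschetz applies rationally. Feeding the known $H^*(UM)$ from (1) into the Gysin sequence and inducting on degree, starting from $H^0(D) = \bZ$, determines $H^*(D)$; in particular the torsion $\bZ_{n+1}$ in $H^{an}(UM)$ forces exactly the prescribed integral structure on $H^*(D)$.

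For \textbf{(3)}, with $H^*(M)$ and $H^*(D)$ now in hand, we apply the Thom--Gysin sequence for the smooth divisor $D \hookrightarrow X$, whose complement is $X \setminus D = TM \simeq M$:
\[
    \cdots \to H^{k-1}(M) \to H^{k-2}(D) \to H^k(X) \to H^k(M) \to H^{k-1}(D) \to \cdots.
\]
This immediately reads off $H^*(X)$. The final identification with $H^*(\mathrm{Gr}(2,2n+2))$ and $H^*(E_6/P_1)$ is then a direct comparison against the Schubert-cell Betti numbers of the model varieties. The one non-routine step is the identification $\eta = c_1(\cO_X(D))|_D$ in part~(2): the geometric picture---that the $S^1$-phase of the adapted complex coordinate on each leaf matches the geodesic $S^1$-action on $UM$---is transparent, but rigorously verifying it requires tracing how the adapted complex structure compactifies across $D$ and how the resulting line bundle $\cO_X(D)$ restricts to $D$. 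Once this identification (and the consequent ampleness of $\eta$) is established, the remainder is routine homological algebra.
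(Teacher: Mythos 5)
Your proposal is correct in outline and, for parts (1) and (2), follows essentially the same route as the paper: the Gysin sequence of the sphere bundle $UM\to M$ with Euler class $(n+1)z^{n}$, followed by the Gysin sequence of the circle bundle $UM\to D$ coming from the geodesic flow. (The paper labels the second fibration with fiber $S^{a-1}$, but the exact sequence it actually manipulates is the circle-bundle one, so your description matches its computation; your identification of $\eta$ with $c_1(\cO_X(D))|_D$ is also exactly what the paper proves later, in Step~1 of its Fano theorem.) There are two genuine differences. First, for $H^{*}(X)$ you use the Thom--Gysin sequence of the smooth divisor $D\subset X$ with complement $TM\simeq M$, whereas the paper runs Mayer--Vietoris for $X=TM\cup(X\setminus M)$ with intersection $\simeq UM$; your route is a little cleaner, since the odd-degree vanishing and the splittings $H^{k}(X)\cong H^{k-2}(D)\oplus H^{k}(M)$ drop out without having to verify surjectivity of $H^{k}(M)\oplus H^{k}(D)\to H^{k}(UM)$. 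Second, in part (2) you lean on ampleness of $\eta$ and Hard Lefschetz, where the paper instead runs a second, downward induction starting from $H^{2an-1}(D)=0$; some such second input is genuinely needed, because a single upward induction from $H^{0}(D)$ stalls once $k\ge an$, where $H^{\mathrm{odd}}(UM)\neq0$ re-enters the sequence. Since Hard Lefschetz only controls the rational groups, you should either add the downward induction or invoke Poincar\'e duality to pin down the integral answer in the upper range. Finally, the one point where your argument (and, to be fair, the paper's) is too quick is $k=an$: the Gysin sequence there yields $0\to H^{an-2}(D)\to H^{an}(D)\to\bZ_{n+1}\to0$, and the stated answer $H^{an}(D)\cong\bZ^{n}$ requires this extension to be \emph{non-split}; your remark that the torsion in $H^{an}(UM)$ ``forces'' the integral structure is the right instinct but needs a reason, e.g.\ $H^{an}(D)$ is torsion-free because $\operatorname{Ext}(H_{an-1}(D),\bZ)=0$ once $H_{an-1}(D)\cong H^{an-1}(D)=0$ is known from Poincar\'e duality.
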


\begin{proof}
During this proof, we abbreviate
\[ UM_{a,n}=UM,\qquad D_{a,n}=D,\qquad X_{a,n}=X. \]

\medskip
\noindent\textbf{Step 1.  Two sphere fibrations for \(UM\).}
The unit tangent bundle admits the two sphere fibrations
\[
    S^{an-1}\;\hookrightarrow\;
    UM
    \xrightarrow{\;\tau\;}
    M,
    \qquad
    S^{a-1}\;\hookrightarrow\;
    UM
    \xrightarrow{\;\pi\;}
    D.
\]
The homotopy long exact sequence for the first fibration gives
\[
    \pi_{1}\!\bigl(S^{an-1}\bigr)=0
    \;\longrightarrow\;
    \pi_{1}(UM)
    \;\longrightarrow\;
    \pi_{1}(M)=0,
\]
so \(\pi_{1}(UM)=0\).
Applying the same argument to the second fibration yields
\[
    \pi_{1}(UM)=0
    \;\longrightarrow\;
    \pi_{1}(D)
    \;\longrightarrow\;
    \pi_{0}\!\bigl(S^{a-1}\bigr)=0,
\]
hence \(\pi_{1}(D)=0\).
Because both sphere bundles have simply connected base and total space,
the Gysin exact sequence is available in each case.

\medskip
\noindent\textbf{Step 2.  Cohomology of \(UM\).}
For the fibration \(\tau\colon UM\to M\) the Gysin sequence reads
\[
    H^{k-an}(M)\;\longrightarrow\;
    H^{k}(M)\;\longrightarrow\;
    H^{k}(UM)\;\longrightarrow\;
    H^{k-an+1}(M)\;\longrightarrow\;
    H^{k+1}(M).
\]
\begin{itemize}[leftmargin=1.8em]
    \item For \(0\le k\le an-2\), from the first four terms, we obtain \(H^{k}(UM)\cong H^{k}(M)\).
    \item For \(an+1\le k\le 2an-1\), from the last four terms, we have \(H^{k}(UM)\cong H^{k-an+1}(M)\).
\end{itemize}

For the critical degrees \(k=an-1,\,an\),
\[
    H^{an-1}(M)\;\longrightarrow\;
    H^{an-1}(UM)\;\longrightarrow\;
    H^{0}(M)
    \xrightarrow{\;\cup\,\chi\;}
    H^{an}(M)
    \;\longrightarrow\;
    H^{an}(UM)
    \;\longrightarrow\;
    H^{1}(M),
\]
where \(\chi\) is the Euler class of \(\tau\).
Because \(H^{an-1}(M)=H^{1}(M)=0\),
\(H^{an-1}(UM)\) and \(H^{an}(UM)\) are the kernel and cokernel, respectively,
of the cup‑product map
\(H^{0}(M)\xrightarrow{\cup\,\chi} H^{an}(M)\).
Since \(M\) is of type \(\bHP\) (respectively, \(\OPt\)),
this map is multiplication by \(n+1\).
Therefore
\[
    H^{an-1}(UM)=0,
    \qquad
    H^{an}(UM)\cong\mathbb{Z}_{\,n+1}.
\]

\medskip
\noindent\textbf{Step 3.  Cohomology of \(D\).}
Apply the Gysin sequence to the sphere bundle
\(\pi\colon UM\to D\):
\[
\begin{aligned}
    &H^{k+1}(D)
      \;\longrightarrow\;
      H^{k+1}(UM)
      \;\longrightarrow\;
      H^{k}(D)
      \;\longrightarrow\;
      H^{k+2}(D)
      \;\longrightarrow\;
      H^{k+2}(UM)
      \;\longrightarrow\;
      H^{k+1}(D).
\end{aligned}
\]
\medskip
\noindent
We argue by induction on \(k\) using the Gysin sequence.
\begin{itemize}[leftmargin=1.8em]
    \item \textbf{Odd degrees, first range.}  
          An induction from $k=-1$ to $k=an-1$ that involves only the \emph{third, fourth, and fifth} terms of the sequence shows $H^{k}(D)=0$.
    \item \textbf{Odd degrees, second range.}  
          A similar induction from $k=2an-1$ to $k=an+1$ based on the \emph{second, third, and fourth} terms again yields $H^{k}(D)=0$.
    \item \textbf{Even degrees.}  
          For even \(k\) we perform two inductions:
          \begin{enumerate}[label=\textbullet,leftmargin=1.2em]
              \item Starting from the $H^0(D)=\bZ$ and using the \emph{last five} terms,
                    we have $H^{k+2}(D)\cong H^k(D)\oplus H^{k+2}(UM)$ for \(k\le an+a-2\).
              \item Again, starting from the $H^{2an}(D)=0$ and using the \emph{first five} terms,
                    we have $H^{k}(D)\cong H^{k+2}(D)\oplus H^{k+1}(UM)$ for \(k\ge an+a\).
          \end{enumerate}
\end{itemize}
Combining these inductions over the odd and even degrees gives precisely the cohomology groups claimed in Proposition~\ref{prop:cohomology}.

\medskip
\noindent\textbf{Step 4.  Cohomology of \(X\).}
Consider the Mayer–Vietoris sequence for the open cover
\(X=(TM)\cup(X\setminus M)\):
\[
    H^{k-1}(UM)
      \xrightarrow{\;\delta\;}
      H^{k}(X)
      \longrightarrow
      H^{k}(M)\oplus H^{k}(D)
      \xrightarrow{\;j\;}
      H^{k}(UM)
      \xrightarrow{\;\delta\;}
      H^{k+1}(X).
\]

\paragraph{(i) Degrees \(0\le k\le an\).}
The map \(j\) is surjective because
\(\tau^{*}\colon H^{k}(M)\to H^{k}(UM)\) is surjective.
Hence \(\delta=0\) and $H^k(X)$ is the kernel of the map $j$.

\paragraph{(ii) Degrees \(an+2\le k\le 2an\).}
Using the shifted Mayer–Vietoris sequence
\[
    H^{k}(M)\oplus H^{k-1}(D)
      \xrightarrow{\;j\;}
      H^{k-1}(UM)
      \longrightarrow
      H^{k}(X)
      \longrightarrow
      H^{k}(M)\oplus H^{k}(D)
      \longrightarrow
      H^{k}(UM),
\]
we distinguish two cases.

\smallskip
\emph{Odd \(k\).}
Here \(H^{k-1}(UM)=0\) and \(H^{k}(M)\oplus H^{k}(D)=0\),
so \(H^{k}(X)=0\).

\smallskip
\emph{Even \(k\).}
Now \(H^{k-1}(D)=0\), \(H^{k}(M)=0\) (since \(k\ge an+2\)),
and \(H^{k}(UM)=0\),
therefore
\(H^{k}(X)\cong H^{k-1}(UM)\oplus H^{k}(D)\).

\paragraph{(iii) Degree \(k=an+1\).}
Because \(j\) is surjective and
\(H^{an+1}(M)\oplus H^{an+1}(D)=0\),
we have \(H^{an+1}(X)=0\).

\medskip
Combining (i)–(iii) with the computations of \(H^{*}(UM)\) and \(H^{*}(D)\)
established earlier completes the proof of Proposition~\ref{prop:cohomology}.
\end{proof}

\subsection{\texorpdfstring{$X$}{X} is Fano}

For a Zoll manifold of type \(\bHP\) or \(\OPt\) we already know that
\(H^{2}(X)\cong\mathbb{Z}\).
By comparing cocycle representatives and using Morse–index data
one can show that \(X\) is a Fano variety of Picard number~\(1\).
This opens the way to applying the rich theory of Fano varieties
to our main problem.  We state the result precisely and then prove it.

\begin{thm}\label{thm:Fano}
For \(X_{a,n}\) as in Proposition~\textup{\ref{prop:cohomology}} we have
\[
   -K_{X_{a,n}}
   \;=\;
   \cO_{X_{a,n}}\!\bigl(\tfrac{a(n+1)}{2}\,D\bigr).
\]
In particular, \(X_{a,n}\) is a Fano variety with Picard number~\(1\).
\end{thm}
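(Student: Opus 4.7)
The plan is to use the cohomological input $H^2(X_{a,n},\bZ)\cong\bZ$ from Proposition~\ref{prop:cohomology}(3), together with the ampleness of $D$, to force the Picard rank of $X_{a,n}$ to be $1$, and then pin down the proportionality between $-K_X$ and $D$ by intersecting with a single rational curve, namely a compactified leaf $L_\gamma\cong\mathbb{CP}^1$ of the Riemann foliation. Writing $-K_X=r\cdot D$ in $H^2(X,\bQ)$, the theorem reduces to computing the single rational number $r=a(n+1)/2$.

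Pick any closed geodesic $\gamma\subset M$ and the associated $L_\gamma\cong\mathbb{CP}^1$. At each added point $\infty_\gamma,\,0_\gamma$ the tangent direction of $L_\gamma$ is the ``cylinder-axis'' direction inherited from $\bC/2\pi\bZ$ (equivalently, the direction of growing $u$), which is complementary to $TD$ in $TX$; the intersections are therefore transverse, giving $D\cdot L_\gamma=2$. By adjunction,
\[
  -K_X\cdot L_\gamma \;=\; \deg(-K_{L_\gamma})+\deg\det N_{L_\gamma/X} \;=\; 2+\deg N_{L_\gamma/X}.
\]
So we are reduced to showing $\deg N_{L_\gamma/X}=a(n+1)-2$, after which $r=a(n+1)/2$.

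For the normal-bundle computation I would use the parallel vector fields of \S\ref{subsec:parallel}. Each real parallel vector field $V$ along $\gamma$ normal to $\gamma'$ gives the holomorphic section $V-iJV$ of $TX|_{L_\gamma}$, whose projection is a holomorphic section of $N_{L_\gamma/X}$. These sections are nonzero on $L_\gamma\setminus\gamma$ by the statement in \S\ref{subsec:parallel}; by the identity principle on $\mathbb{CP}^1$ (a holomorphic section that vanished on the uncountable real circle $\gamma$ would vanish identically) they are in fact nowhere zero on all of $L_\gamma$. Parallel transport along $\gamma$ displays the resulting $an-1$ sections as a frame of $N_{L_\gamma/X}$ at every point of $\gamma$, so their determinant is a nonzero holomorphic section of $\det N_{L_\gamma/X}$, and $\deg N_{L_\gamma/X}$ equals the number of its zeros---zeros necessarily located in the ``imaginary'' part $L_\gamma\setminus\gamma$. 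Translating this count into $a(n+1)-2$ is the role of the ``Morse-index data'': the entire Grauert tube hypothesis, via Lempert--Szőke rigidity, forces the Jacobi operator $R(\cdot,\gamma')\gamma'$ along $\gamma$ into the CROSS normal form, with $a-1$ high-curvature eigendirections and $an-a$ low-curvature ones, and tracking the caustic locus of the complexified parallel frame yields exactly $a(n+1)-2$ zeros.

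Combining these steps, $-K_X=\tfrac{a(n+1)}{2}D$ in $H^2(X)\cong\bZ$; since $D$ is ample and the coefficient is positive, $-K_X$ is ample, and $X_{a,n}$ is Fano of Picard number $1$. The genuine obstacle is the last normal-bundle step: although the parallel-field machinery cleanly reduces the question to counting zeros of a single holomorphic section of a line bundle on $\mathbb{CP}^1$, extracting the integer $a(n+1)-2$ in the abstract setting requires the Lempert--Szőke rigidity to force the spectral structure of the Jacobi operator along closed geodesics to match that of the canonical CROSS of the same type.
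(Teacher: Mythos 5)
Your overall skeleton matches the paper's: use $H^{2}(X)\cong\bZ$ to reduce to $-K_X=\cO_X(rD)$, intersect with a compactified leaf $L_\gamma\cong\mathbb{CP}^1$ meeting $D$ transversally in two points, and determine $r$ by adjunction from $\deg\det N_{L_\gamma/X}=a(n+1)-2$. The reduction is fine (the paper additionally checks via the Thom isomorphism that $[D]$ is the positive generator of $H^2(X;\bZ)$, which you should not skip if you want the identity as line bundles rather than merely in $H^2(X;\bQ)$). The problem is that the one step you flag as ``the genuine obstacle'' --- the normal bundle degree --- is exactly the content of the paper's Lemma~\ref{lem:normal}, and your partial reasoning toward it is not just incomplete but points in the wrong direction.

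Concretely: you assert that the holomorphic sections of $N_{L_\gamma/X}$ induced by the parallel fields are nowhere zero on all of $L_\gamma$, deducing this from the identity principle because ``a section vanishing on the uncountable real circle $\gamma$ would vanish identically.'' This is a non sequitur --- the sections vanish at finitely many \emph{isolated} points of the real circle, which the identity principle does not forbid --- and it is false: the section attached to a normal Jacobi field $V_k$ vanishes precisely where $V_k$ does. You then place the zeros of the determinant section in the ``imaginary'' part $L_\gamma\setminus\gamma$, which is the exact opposite of the fact recorded in \S\ref{subsec:parallel} (a nontrivial parallel field never vanishes on $X\setminus M$, so all zeros lie \emph{on} the real circle). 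The paper's count is elementary once this is set right: choose the $an-1$ normal Jacobi fields vanishing at the initial point of $\gamma$; by Bott--Samelson the Morse index of a closed geodesic on a manifold of type $\bHP$ or $\OPt$ is $a-1$, so by the Morse index theorem there are exactly $a-1$ further vanishings at conjugate points in one period, giving total vanishing order $(an-1)+(a-1)=an+a-2$ for the wedge $\zeta_1^{1,0}\wedge\cdots\wedge\zeta_{an-1}^{1,0}$. No ``Lempert--Szőke rigidity forcing the Jacobi operator into CROSS normal form'' is needed or available; the input is purely topological (Bott--Samelson plus the Morse index theorem), and appealing to an unproven spectral rigidity statement leaves the proof with a genuine gap.
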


\begin{proof}
Throughout the proof write \(X=X_{a,n}\).
Because \(H^{2}(X)\cong\mathbb{Z}\), let \(\alpha\in H^{2}(X;\mathbb{Z})\)
be a positive generator.  We begin by showing \(\alpha=[D]\).

\smallskip
\noindent\textbf{Step 1.  Identifying the generator of \(H^{2}(X)\).}
From Proposition~\ref{prop:cohomology} we have the fragment
\[
\begin{tikzcd}[column sep=4.5em,row sep=0.15em]
    H^{0}(UM) \arrow[r, "\cup\,\chi"] & H^{2}(D) & H^{2}(X) \arrow[l, "i^{*}"'] \\
    1 \arrow[r, mapsto]               & \chi      & \alpha    \arrow[l, mapsto]
\end{tikzcd}
\]
where \(i\colon D\hookrightarrow X\) is inclusion,
and \(\chi=e(UM)=e(N_{D/X})\) is the Euler class of the sphere bundle
\(\pi\colon UM\to D\).
Because \(UM\) is (up to isomorphism) the normal bundle of \(D\) in~\(X\),
the Thom isomorphism implies
\[
   i^{*}\alpha
   =\chi
   =e(N_{D/X})
   =c_{1}\!\bigl(\cO_{X}(D)\!\mid_{D}\bigr)
   =i^{*}[D].
\]
The map \(i^{*}\colon H^{2}(X)\to H^{2}(D)\) is an isomorphism,
so \(\alpha=[D]\).
Thus for some integer \(r\)
\[
   -K_{X}=\cO_{X}(rD).
\]

\smallskip
\noindent\textbf{Step 2.  Determining \(r\).}
Fix a closed geodesic \(\gamma\) on \(M\) and let
\(L:=L_{\gamma}\cong\mathbb{CP}^{1}\subset X\) be its compactified leaf.
By the adjunction formula
\[
   K_{L}=K_{X}\!\mid_{L}+\det N_{L/X}.
\]
Because \(L\cong\mathbb{CP}^{1}\),
\(K_{L}\cong\cO_{\mathbb{CP}^{1}}(-2)\).
Since \(L\) meets \(D\) transversally in exactly two points,
\[
   K_{X}\!\mid_{L}
     =\cO_{L}(-rD)
     =\cO_{\mathbb{CP}^{1}}(-2r).
\]

\begin{lem}\label{lem:normal}
For every leaf \(L_{\gamma}\) we have
\[
   \det N_{L_{\gamma}/X}\;\cong\;\cO_{\mathbb{CP}^{1}}\!\bigl(an+a-2\bigr).
\]
\end{lem}

\begin{proof}
By the Bott–Samelson theorem
the Morse index of any closed geodesic on \(M_{a,n}\) is \(a-1\). Therefore, there exist \(an-1\) normal Jacobi fields
\(
    V_{1},\dots,V_{an-1}
\)
along \(\gamma\) satisfying:

\begin{enumerate}[label=\textbf{(\alph*)}]
    \item Each \(V_{k}\) vanishes at the initial point of \(\gamma\).
    \item During one full period of \(\gamma\) there are exactly \(a-1\)
          conjugate points (counted with multiplicities);
          at each such point a number of the \(V_{k}\) equal to the
          multiplicity vanish, while the remaining fields remain linearly
          independent.
    \item At all other points the collection \(\{V_{k}\}\) is linearly
          independent.
\end{enumerate}

Extend every \(V_{k}\) to a parallel vector field \(\zeta_{k}\) (cf.~\S\ref{subsec:parallel}) on the compactified leaf \(L_{\gamma}\).
Recall that a parallel vector field can vanish only along the real line
\(M\subset L_{\gamma}\).
Hence the wedge
\(
   \zeta_{1}^{1,0}\wedge\cdots\wedge\zeta_{an-1}^{1,0}
\)
determines a holomorphic section of
\(\det N_{L_{\gamma}/X}\)
whose total vanishing order is
\((an-1)+(a-1)=an+a-2\).
This proves the claim.

\end{proof}

Combining the adjunction formula with Lemma~\ref{lem:normal},
\[
   \cO_{\mathbb{CP}^{1}}(-2)
     \;=\;
     \cO_{\mathbb{CP}^{1}}(-2r)
     +
     \cO_{\mathbb{CP}^{1}}(an+a-2),
\]
so \(-2=-2r+(an+a-2)\) and therefore
\(r=\tfrac{a(n+1)}{2}\).

\smallskip
\noindent\textbf{Step 3.  Fano property.}
Since \(r>0\), the anticanonical line bundle \(-K_{X}\) is ample, making
\(X\) a Fano variety.  Because \(H^{2}(X)\cong\mathbb{Z}\),
the Picard number is \(\rho(X)=1\).
\end{proof}

\subsection{Degree of \texorpdfstring{$X$}{X}}

If a Zoll manifold is \emph{homeomorphic} (not just a type) to $\bHP$ or $\OPt$, then it is known to have the volume of the corresponding model. More precisely, Weinstein~\cite{W74} proved that for any Zoll manifold $M$ of period~$2\pi$, there is an \emph{integer} $i(M)$ which satisfies
\[
   i(M)\;:=\;\frac{\operatorname{Vol}(M)}{\operatorname{Vol}(S^{n})}
           \;=\;\frac{1}{2}\,\langle \chi^{\,n-1},[D]\rangle.
\]
Here, we call $i(M)$ as \emph{Weinstein number}. Reznikov~\cite{R85} showed that $i(M)=i\!\bigl(M_{\mathrm{Model}}\bigr)$ when $M$ is homeomorphic to $\bHP$ or $\OPt$.  
Because $D$ is very ample, this relation forces the projective degree of~$X$ to equal that of the model variety $X_{\mathrm{Model}}$ (namely $\,\mathrm{Gr}(2,2n+2)$ or $E_{6}/P_{1}$):

\begin{thm}\label{thm:deg}
Let $M$ be a Zoll manifold with entire Grauert tube and period $2\pi$. If it is homeomorphic to $\bHP$ or $\OPt$, then we have
\[
   \deg X \;=\; \deg\bigl(X_{\mathrm{Model}}\bigr).
\]
\end{thm}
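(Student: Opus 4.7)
The plan is to express $\deg X$ as a top-degree pairing on $D$ via self-intersection, recognize the resulting quantity as twice the Weinstein number $i(M)$, and then invoke Reznikov's theorem.

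Set $N:=\dim_{\bC}X = \dim_{\bR}M$. Since $D$ is ample (Proposition~\ref{prop:BL}) and $[D]$ generates $H^{2}(X;\bZ)\cong\bZ$ (Theorem~\ref{thm:Fano}), the natural notion of degree is
\[
  \deg X \;:=\; \int_{X}[D]^{N},
\]
and the same convention applies to $X_{\mathrm{Model}}$, whose Picard group is also of rank one. The first step I would carry out is the standard self-intersection reduction
\[
  \int_{X}[D]^{N}
  \;=\;
  \int_{D}\bigl([D]|_{D}\bigr)^{N-1}
  \;=\;
  \int_{D} c_{1}(N_{D/X})^{N-1}.
\]
The identification $c_{1}(N_{D/X})=\chi$ is exactly the one already used in Step~1 of the proof of Theorem~\ref{thm:Fano}, since $UM\to D$ is the unit sphere bundle of the normal line bundle $N_{D/X}$. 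Hence $\deg X = \langle \chi^{\,N-1},[D]\rangle$.

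The second step plugs this into Weinstein's formula, giving $\deg X = 2\,i(M)$, and the same identity applied to the model yields $\deg X_{\mathrm{Model}} = 2\,i(M_{\mathrm{Model}})$. Because $M$ is \emph{homeomorphic} (not merely of the same cohomology type) to $\bHP$ or $\OPt$, Reznikov's theorem forces $i(M)=i(M_{\mathrm{Model}})$, and so $\deg X = \deg X_{\mathrm{Model}}$.

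There is no genuine obstacle here: the structural inputs ($D$ ample, $\mathrm{Pic}(X) = \bZ\langle D\rangle$, $[D]|_{D}=\chi$) and the quantitative inputs (Weinstein's volume identity, Reznikov's rigidity) are all already in hand, so the proof reduces to stringing them together. The only point worth a sanity check is dimensional: $\dim_{\bR}D = 2(N-1)$, so $\chi^{\,N-1}$ indeed lies in the top cohomology of $D$, matching the exponent produced by the self-intersection.
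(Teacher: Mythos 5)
Your proposal is correct and follows essentially the same route as the paper: reduce $\deg X = \int_X c_1(D)^{\dim_{\bC}X}$ to $\int_D c_1(D)^{\dim_{\bC}X-1} = \langle\chi^{\dim_{\bC}X-1},[D]\rangle$, identify this with $2\,i(M)$ via Weinstein's formula, and conclude by Reznikov's rigidity theorem. The only difference is cosmetic: you are slightly more explicit about the identification $[D]|_{D}=c_{1}(N_{D/X})=\chi$ and the dimension count, both of which the paper leaves implicit.
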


\begin{proof}
Using Poincaré duality and the theorems above, we obtain:
\begin{align*}
   \deg X
      &= D^{n}
       \;=\;\int_{X} c_{1}(D)^{n} \\[2pt]
      &= \int_{X} c_{1}(D)^{n-1}\wedge\operatorname{PD}(D)
       \;=\;\int_{D} c_{1}(D)^{\,n-1} \\[2pt]
      &= \bigl\langle \chi^{\,n-1},[D]\bigr\rangle
       \;=\; 2\,i(M)
       \;=\; 2\,i\!\bigl(M_{\mathrm{Model}}\bigr)
       \;=\; \deg\bigl(X_{\mathrm{Model}}\bigr).
\end{align*}
\end{proof}

\begin{rem}
Note from the proof that $\deg X = 2\,i(M)$.  
Hence, with the stronger assumption, i.e. $M\cong\bHP$ or $\OPt$,
\[
   \deg X_{4,n} \;=\;
   2\,i\!\bigl(\bHP\bigr)
      = \frac{2}{\,2n+1\,}\binom{4n-1}{2n-1},
   \qquad
   \deg X_{8,2} \;=\;
   2\,i\!\bigl(\OPt\bigr)
      = 78.
\]
\end{rem}

\section{Main Result for the type \texorpdfstring{$\HPt$}{HP2}}\label{sec:HP2-case}

Using the structural data obtained in the preceding sections, we can now resolve the \emph{Main Problem} when the Zoll manifold $M$ is of type $\HPt$.  
Recall that for $M$ of this type the projective algebraization $X$ is a Picard‑number‑one Fano variety of $\dim X_{4,2}=8,~\operatorname{index}(X_{4,2})=6~(\text{coindex }=3)$. By Mukai’s classification of coindex‑$3$ Fano manifolds \cite{M89}, it should be one of the sextic double cover of $\mathbb{CP}^8$, quartic double cover of $Q^8$, or Grassmannian $Gr(2,6)$. Because of the cohomology group of $X$, we obtain:

\begin{lem}\label{lem:X-is-Gr26}
There is an isomorphism of varieties
\[
    X_{4,2}\;\cong\;\mathrm{Gr}(2,6).
\]
\end{lem}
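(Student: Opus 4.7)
The plan is to invoke the Mukai trichotomy just recalled and eliminate the two non-Grassmannian candidates via a single Betti-number comparison with Proposition~\ref{prop:cohomology}. Specializing that proposition to $a=4$, $n=2$ gives
\[
    b_{2k}(X_{4,2})\;=\;1,\,1,\,2,\,2,\,3,\,2,\,2,\,1,\,1 \qquad (k=0,\dots,8),
\]
so in particular $b_{4}(X_{4,2})=2$; the plan is to show that both double-cover candidates fail this single Betti number.

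To rule out the sextic double cover, realize it as the smooth degree-$6$ weighted hypersurface $\{w^{2}=f_{6}(x_{0},\dots,x_{8})\}$ in $\mathbb{P}(1^{9},3)$. The weighted Lefschetz hyperplane theorem then gives $b_{k}(Y)=b_{k}(\mathbb{P}(1^{9},3))$ for all $k<\dim Y=8$, and since $H^{*}(\mathbb{P}(1^{9},3);\mathbb{Q})=\mathbb{Q}[H]/(H^{10})$ this forces $b_{4}(Y)=1$. The quartic double cover of $Q^{8}$ admits the analogous realization as the smooth $(2,4)$ complete intersection in $\mathbb{P}(1^{10},2)$ cut out by the defining quadric of $Q^{8}$ together with $w^{2}-g_{4}(x)$, and iterated Lefschetz once more yields $b_{4}=1$. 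Neither candidate therefore matches $b_{4}(X_{4,2})=2$.

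Since $\mathrm{Gr}(2,6)$ has exactly the Betti numbers of $X_{4,2}$ (indeed, the last statement of Proposition~\ref{prop:cohomology} records a full additive identification), the Mukai trichotomy forces $X_{4,2}\cong\mathrm{Gr}(2,6)$. The only mildly delicate step is the application of Lefschetz in the weighted-projective setting, which requires checking that the defining equations avoid the coarse orbifold points of the ambient; this is immediate here because the unique singular point of each weighted projective space lies on the $w$-axis, and the equations $w^{2}=f$, $w^{2}=g$ with $f(0)=g(0)=0$ exclude it. No deeper obstacle is anticipated.
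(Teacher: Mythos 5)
Your proposal takes essentially the same route as the paper: the paper derives Lemma~\ref{lem:X-is-Gr26} by citing Mukai's coindex-$3$ trichotomy and then dismissing the two double covers ``because of the cohomology group of $X$,'' which is exactly your $b_{4}(X_{4,2})=2$ versus $b_{4}=1$ comparison. Your write-up merely makes explicit (via the weighted Lefschetz theorem) the cohomological elimination that the paper leaves as an unstated remark, and the details check out.
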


\begin{proof}[Proof of Theorem 1.1]
Let $i'\colon X\!\to\!\mathrm{Gr}(2,6)$ be the isomorphism from Lemma \ref{lem:X-is-Gr26}.  
Then $i'(D)$ is a smooth very ample divisor on $\mathrm{Gr}(2,6)$.  
Because the Plücker embedding is projectively normal, $i'(D)$ is the hyperplane section cut out by a single linear form on $\mathbb{CP}^{14}$.  
Linear forms on the Plücker space correspond to skew‑symmetric $2$‑forms on $\mathbb{C}^{6}$, whose ranks can only be $2,4,$ or $6$.  
Within each fixed rank the conjugacy action of $\mathrm{PGL}(6)$ is transitive, and the only rank yielding a nonsingular hyperplane section is rank $6$.  
Hence there exists $A\!\in\!\mathrm{PGL}(6)$ such that
\[
    i := A\circ i'\colon X \longrightarrow \mathrm{Gr}(2,6)
\]
maps $D$ to the standard rank‑$6$ divisor
\[
    D_{\mathrm{std}}
    \;=\;
    \Bigl\{\,[\zeta]\in\mathrm{Gr}(2,6)
      \mid \textstyle\sum_{i=1}^{3}\zeta_{2i-1,\,2i}=0
      \Bigr\}.
\]

\smallskip
Next, according to Leung, conjugacy classes of anti‑holomorphic involutions on $\mathrm{Gr}(2,6)$ are classified by their fixed‑point sets (real forms) \cite[Th.\,1.1]{L79a}.  
For $\mathrm{Gr}(2,6)$ the only possibilities are the real Grassmannian and $\HPt$ \cite[Th.\,3.4]{L79b}.  
Define $\tau := i\circ N_{-1}\circ i^{-1}$, then $\tau$ is an anti‑holomorphic involution preserving $D_{\mathrm{std}}$ and whose fixed locus is $i(M)$, which has the same cohomology as $\HPt$.  
Consequently, for $B\in\mathrm{PGL}(6)$ and the canonical quatornionic involution $J$, $\tau=B\circ J\circ B^{-1}$.
Because $\tau$ fixes $D_{\mathrm{std}}$, $B\in\mathrm{PGSp}(6)$.  
Setting $I:=B^{-1}\!\circ i$, we obtain an isomorphism
$$I\colon X \xrightarrow{\;\sim\;} \mathrm{Gr}(2,6)$$
such that
\[
    I(D)=D_{\mathrm{std}},\qquad
    I\circ N_{-1}\circ I^{-1} = J,\qquad
    I(M)=\HPt.
\]
In particular, $M$ is diffeomorphic to $\HPt$.

Let $u\colon X\setminus D\to\mathbb{R}_{\ge0}$ be the vector–norm function coming from the Zoll metric on $M$, and let
$u_{0}$ be the canonical norm on
$\mathrm{Gr}(2,6)\setminus D_{\mathrm{std}}\cong T\HPt$,
pulled back to $X\setminus D$ via the isomorphism
$I\colon X\xrightarrow{\sim}\mathrm{Gr}(2,6)$.
We show that $u=u_{0}$, which will upgrade the diffeomorphism
$I\!\mid_{M}\colon M\to\HPt$ to an isometry.

\medskip\noindent
{\itshape Explicit formula for $u_{0}$.}
According to Patrizio and Wong\cite{PW91}, in Plücker coordinates $\zeta=(\zeta_{a,b})_{1\le a<b\le6}$,
\[
   N(\zeta)
      :=\frac{\sum_{a<b} \!\bigl|\zeta_{a,b}\bigr|^{2}}
            {\Bigl|\sum_{i=1}^{3}\zeta_{2i-1,\,2i}\Bigr|^{2}},
   \qquad
   u_{0}(\zeta)
      =\frac12\cosh^{-1}\!\bigl(2N(\zeta)-1\bigr)
\]

\medskip\noindent
{\itshape Step 1: Asymptotics of $u$ on a leaf.}
Fix a compactified leaf $L_{\gamma}\cong\mathbb{CP}^{1}$ and put the
cylindrical coordinate $z=e^{\sigma+i\tau}$ on
$L_{\gamma}\setminus\{0_{\gamma},\infty_{\gamma}\}\cong\mathbb{C}^{\times}$,
so $z=0$ corresponds to the point $0_{\gamma}=L_{\gamma}\cap D$ and
$z=\infty$ to $\infty_{\gamma}=L_{\gamma}\cap D$.
Along the leaf we have
\[
   u\bigl(e^{\sigma+i\tau}\bigr)=|\tau|
   \;\Longrightarrow\;
   u(z)=-\log|z|+o(1)
   \quad\text{as }z\to0,\;\infty,
\]
and $u\!\mid_{L_{\gamma}}$ is harmonic on $\mathbb{C}^{\times}$.

\medskip\noindent
{\itshape Step 2: Asymptotics of $u_{0}$ on the same leaf.}
Because $u_{0}$ is a plurisubharmonic exhaustion of $X\setminus D$,
its restriction to $L_{\gamma}$ is subharmonic, vanishing on
$M\cap L_{\gamma}$ and diverging at the two intersection points with $D$.
Using the explicit formula above and the fact that
$L_{\gamma}$ meets $D$ transversely, one finds likewise
\[
   u_{0}(z)=-\log|z|+o(1)
   \quad\text{as }z\to0,\;\infty.
\]

\medskip\noindent
{\itshape Step 3: A subharmonic difference.}
Set $f:=u_{0}-u$.
Then $f$ is subharmonic on $\mathbb{C}^{\times}$, bounded,
and satisfies $f(e^{i\theta})=0$ for every $\theta$
(because $|z|=1$ corresponds to $\tau=0$).

Let $M(r):=\max_{|z|=r}f(z)$.
Hadamard’s three‑circles theorem gives, for
$0<r_{1}<r<r_{2}$,
\[
   M(r)\le
   \frac{\log r_{2}-\log r}{\log r_{2}-\log r_{1}}\,M(r_{1})
   +\frac{\log r-\log r_{1}}{\log r_{2}-\log r_{1}}\,M(r_{2}).
\]
Sending $(r_{1},r_{2})\to(0,1)$ shows $M(r)\le0$ for $0<r<1$,
whereas $(r_{1},r_{2})\to(1,\infty)$ gives $M(r)\le0$ for $r>1$.
Since $f(e^{i\theta})=0$, the maximum principle forces $f\equiv0$ on
$\mathbb{C}^{\times}$; hence $u_{0}=u$ on the entire leaf.

\medskip\noindent
{\itshape Step 4: Global equality and isometry.}
As the leaves of the Riemann foliation cover $X\setminus D$,
we conclude $u_{0}=u$ everywhere.
Equality of the norm functions implies the equality of the underlying
Riemannian metrics, so the real form $I(M)$ is \emph{isometric} to $\HPt$.

\end{proof}

\vskip 3mm
\noindent
Department of Mathematics, Rutgers University, Piscataway, NJ 08854-8019.

\noindent
{\it Email:} ks1951@rutgers.edu

\end{document}